\newtheorem{theorem}{Theorem}[section]
\newtheorem{proposition}[theorem]{Proposition}
\newtheorem{lemma}[theorem]{Lemma}
\theoremstyle{definition}
\newtheorem{definition}[theorem]{Definition}
\theoremstyle{remark}
\numberwithin{equation}{section}
\DeclareMathOperator{\supp}{supp}
\newcommand{\abs}[1]{\left\vert#1\right\vert}
\newcommand{\proin}[2]{\left<#1,#2\right>}
\newcommand{\norm}[1]{\left\Vert#1\right\Vert}
\begin{document}
\title[]{Partial derivatives, singular integrals and Sobolev Spaces in dyadic settings}
%


\author[]{Hugo Aimar}
%
\author[]{Juan Comesatti}
%
\author[]{Ivana G\'{o}mez}
%
\author[]{Luis Nowak}

\thanks{This work was supported by the MINCYT in Argentina: CONICET and ANPCyT; UNL and UNComa}
\subjclass[2010]{Primary 42C40, 26A33}

\keywords{Sobolev regularity, Haar basis, space of homogeneous type, Calder\'on-Zygmund operator, dyadic analysis}

\begin{abstract}
In this note we show that the general theory of vector valued singular integral operators of Calder\'on-Zygmund defined on general metric measure spaces, can be applied to obtain Sobolev type regularity properties for solutions of the dyadic fractional Laplacian. In doing so, we define partial derivatives in terms of Haar multipliers and dyadic homogeneous singular integral operators.
\end{abstract}
\maketitle

\section{Introduction}\label{sec:introduction}

In order to properly state the underlying ideas in the main problems and results of this paper, let us start by a well known and classical result that we give in a form which is suitable for our setting. Set $\mathscr{S}(\mathbb{R}^n)$ to denote the class of Schwartz test functions of the theory of distributions in the Euclidean space $\mathbb{R}^n$. As usual we shall use $\widehat{f}$ to denote the Fourier transform of $f$. A linear operator $L: \mathscr{S}(\mathbb{R}^n)\to \mathscr{S}(\mathbb{R}^n)$ is a second order differential operator in $\mathbb{R}^n$ of the form $L\varphi =\sum_{i,j=1}^n a_{ij}\frac{\partial^2\varphi}{\partial x_i\partial x_j}$ if and only if $\widehat{L\varphi}(\xi)=m(\xi)\widehat{\Delta\varphi}(\xi)$ with $m(\xi)$ homogeneous of degree zero and $m(\xi)$ on the unit sphere $S^{n-1}$ of $\mathbb{R}^n$ is a second order polynomial.
The proof is just  the application of the basic rules for the Fourier Transform of derivatives. Hence, in a sense, the family of all constant coefficient second order differential operators can be identified with the class $\mathscr{M}$ of those special multipliers  $m(\xi)$.

A lower order instance of the above remark is provided by first order derivatives defined in terms of the operator $\sqrt{-\Delta}$. In fact, $\widehat{\frac{\partial}{\partial x_j}}=c\frac{\xi_j}{\abs{\xi}}(\sqrt{-\Delta})^{\widehat{}}$. In other words, the partial derivatives can be regarded as the composition of $\sqrt{-\Delta}$ and a Calder\'on-Zygmund singular integral, actually the $j$-th Riesz transform whose Fourier multiplier is $\frac{\xi_j}{\abs{\xi}}$. Aside from the relevance of this interplay between the Fourier Transform and differential operators in the theory of Calder\'on-Zygmund of Sobolev regularity, sometimes a Laplacian type operator and a particular Fourier type analysis are known, nevertheless there is no a natural way to understand partial or directional derivatives. The above identification of linear PDE with $\mathscr{M}$ provides an analytical natural way to define differential operators associated with the Laplacian of the setting. In this note we consider these problem in the dyadic setting. For simplicity we shall work our theory in $\mathbb{R}^+=[0,\infty)$, since it becomes a quadrant for the standard dyadic intervals in $\mathbb{R}$. The theory extends naturally to more general underlying spaces and more general dyadic families. Our program includes the analysis of the homogeneity properties of kernels and multipliers, the definition of partial derivatives and gradient and the characterization through norms of the gradient of the Sobolev spaces provided by the dyadic fractional Laplacian. The main tool will be a vector valued theorem of singular integrals of Calder\'on-Zygmund type defined on a space of homogeneous type. Following ideas in \cite{AiGoPetermichl} we give a model in an adequate space of homogeneous type of the dyadic setting, now with values in an infinite dimensional sequence space of the type $\ell^2$.

The paper is organized as follows. In Section~\ref{sec:DyadicSetting} we describe the basic dyadic setting, including some partitions of the class of all dyadic intervals in subfamilies that will play a special role in the analysis of homogeneity properties of the kernels that we accomplish in Section~\ref{sec:HomogeneousMultipliers}. Section~\ref{sec:LaplacianGradient} is devoted to introduce directional and partial derivatives and to show that the $L^2$ norms of the gradient of $f$ is the right dyadic fractional energy contained in $f$. We also obtain the formulas, via singular integrals and the fractional dyadic Laplacian, for directional and partial derivatives and for the gradient in terms of a vector valued singular integral. In Section~\ref{sec:CalderonZygmundSingularETH} we briefly sketch the vector valued theory of singular integrals on spaces of homogeneous type that we use in Section~\ref{sec:SobolevRegularity} in order to prove our main result showing the boundedness of the $L^p$ norm of the gradient of $f$ in terms of the $L^p$ norm of the dyadic fractional Laplacian $(-\Delta)^{s/2}_{dy}$.

\section{The dyadic system in $\mathbb{R}^+$. Dyadic metric. Haar basis}\label{sec:DyadicSetting}

In the statements and proofs of all the definitions and results of this paper the notation will play an important role at simplifying the presentation. In this section we introduce the basic notations and known facts in $\mathbb{R}^+$ in order to review the concepts, but mostly in order to establish the notational agreements. For $j\in\mathbb{Z}$ and $k\geq 0$, a nonnegative integer, set $I^j_k=[k2^{-j},(k+1)2^{-j})$. For $j\in\mathbb{Z}$, $\mathcal{D}^j=\{I^j_k: k\geq 0\}$, $\mathcal{D}=\cup_{j\in\mathbb{Z}}\mathcal{D}^j$ is the family of all dyadic intervals in $\mathbb{R}^+$. For $x$, $y\in\mathbb{R}^+$ with $x\neq y$ set $I(x,y)$ to denote the smallest interval in $\mathcal{D}$ such that $x$ and $y$, both, belong to $I(x,y)$. The minimality of $I(x,y)$ guarantees that $x$ and $y$ belong to different dyadic halves of $I(x,y)$. The measure $\abs{I(x,y)}=\delta(x,y)$ provides  metric (ultra-metric) in $\mathbb{R}^+$ which is larger than the Euclidean but not equivalent. The $\delta$-balls are the dyadic intervals. In our further analysis we shall use some well known basic properties of the normal (or $1$-Ahlfors) space of homogeneous type structure induced in $\mathbb{R}^+$ by $\delta$ and the standard Lebesgue measure. In particular we shall use that the singular behavior in the sense of integrals of the powers of $\delta(x,y)$ are exactly those of the corresponding powers of $\abs{x-y}$. Hence $(\delta(x,y))^{-2}$ is integrable as a function of $y\neq B_{\delta}(x,\varepsilon)$ uniformly in $x$. The function $h^0_0(x)=\mathcal{X}_{[0,\tfrac{1}{2})}(x)-\mathcal{X}_{[\tfrac{1}{2},1)}(x)$ is the basic Haar wavelet associated to, and supported in, $I^0_0=[0,1)\in\mathcal{D}$. The scaling $h^j_k(x)=2^{j/2}h^0_0(2^jx-k)$ provides the Haar wavelets supported in $I=I^j_k\in\mathcal{D}$. The sequence of all these wavelets, $\mathscr{H}$, is an orthonormal basis for the space $L^2(\mathbb{R}^+,dx)$. Actually, the above construction determines a bijection between $\mathscr{H}$ and $\mathcal{D}$. So, sometimes we shall use $I(h)$ to denote the dyadic interval supporting $h$ and sometimes we shall write $h_I$ to denote the Haar wavelet supported in $I$. Also for every $(j,k)\in\mathbb{Z}\times\mathbb{Z}^+$ we associate a dyadic interval $I^j_k$ and a Haar wavelet $h^j_k$.

Since $\mathbb{R}^+$ is still a cone in the algebraic sense, we can multiply the points in $\mathbb{R}^+$ by positive numbers and we remain in $\mathbb{R}^+$. Hence for a given set $A\subset \mathbb{R}^+$ and a given positive number, as usual, $\alpha A=\{\alpha a: a \in A\}\subset \mathbb{R}^+$. In particular, $2I=[2a,2b)$ of $I=[a,b)$ with $0\leq a<b$. An important fact in the further development is that the operation of multiplication by $2$ preserves $\mathcal{D}$. Moreover it is a bijection of $\mathcal{D}$. Being $\tfrac{1}{2}I$ its inverse.

On the other hand, there exists another notion of duplication of dyadic intervals that is given by ancestry. At some points of our analysis we shall need a simple notation to denote ancestry. For $I\in\mathcal{D}$ we shall write $I^{(1)}$ in order to denote the first ancestor of $I$. In other words, if $I\in\mathcal{D}^j$, $I^{(1)}$ is the only $J\in\mathcal{D}^{j-1}$ such that $J\supset I$. In general, we shall write $I^{(l)}$, $l\geq 1$, to denote the $l$-th ancestor of $I$.

The two notions of \textit{duplication} of $I$, provided by $2I$ and $I^{(1)}$ generally do not coincide. Actually they coincide only when $I=[0,2^{-j})$ for some $j\in\mathbb{Z}$.

The sequence of nested partitions provided by $\mathcal{D}$ on $\mathbb{R}^+$ induces a partition of ${\mathbb{R}^+}^2=\mathbb{R}^+\times \mathbb{R}^+$ in terms of the level sets of the metric $\delta$. In fact, for $I\in\mathcal{D}$ set $B(I)=(I^+\times I^-)\cup (I^-\times I^+)$ and $\Lambda_j=\cup_{I\in\mathcal{D}_j}B(I)$, $j\in\mathbb{Z}$. Then the sets $\Lambda_j$ are pairwise disjoint and $\mathbb{R}^+\times \mathbb{R}^+=\cup_{j\in\mathbb{Z}}\Lambda_j\cup\triangle$, where $\triangle$ is the diagonal of $\mathbb{R}^+\times \mathbb{R}^+$. Moreover, $\Lambda_j=\{(x,y):\delta(x,y)=2^{-j}\}$ for every $j\in\mathbb{Z}$. Set $\mathscr{B}=\{B=B(I):I\in\mathcal{D}\}$ to denote the set of all butterflies. A partition of $\mathscr{B}$ is induced by the equivalence relation $B_1\sim B_2$ if and only if there exists an integer $l\in\mathbb{Z}$ such that $B_2=B(2^lI)$ if $B_1=B(I)$. Since for $I=I^j_k=[k2^{-j},(k+1)2^{-j})$ we have that $2^lI=2^lI^j_k=[k2^{l-j},(k+1)2^{l-j})$, then for every $I=I^j_k$ there exists one and only $J$ in $\mathcal{D}^0$ with $B(I)\sim B(J)$ (namely $J=I^0_k$). Hence
\begin{equation*}
\mathscr{B}=\bigcup_{k\geq 0} \widetilde{B}([k,k+1))
\end{equation*}
where $\widetilde{B}([k,k+1))=\{B\in\mathscr{B}: B\sim B([k,k+1))\}$ is the equivalence class of $B([k,k+1))$. Since
\begin{equation*}
(\mathbb{R}^+)^2\setminus \triangle = \bigcup_{B\in\mathscr{B}} B= \bigcup_{k\geq 0}\bigcup_{B\in \widetilde{B}([k,k+1))} B = \bigcup_{k\geq 0} \Gamma_k
\end{equation*}
with $\Gamma_k = \bigcup_{B\in\widetilde{B}([k,k+1))}B$. See Figure~\ref{fig:LevelSetsdelta_and_Gamma0}.


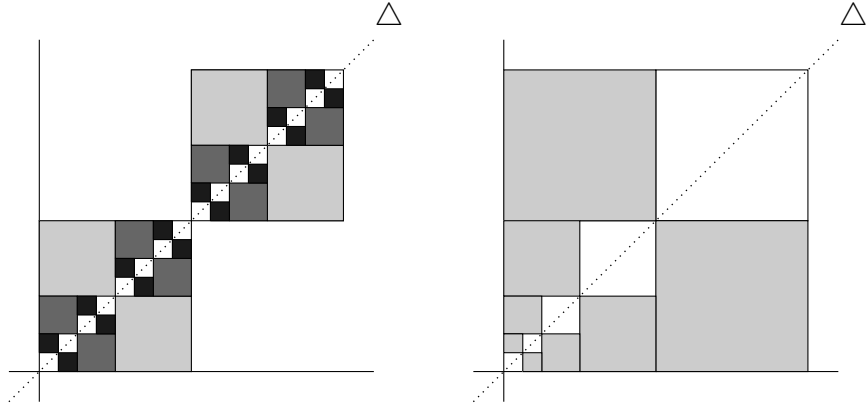
\begin{figure}[ht]
		\begin{tikzpicture}[scale=2]
		\begin{minipage}{0.5\textwidth}
		\foreach \x in {0,1}
		\draw[-] (0+\x,0+\x)--(1+\x,0+\x)--(1+\x,1+\x)--(0+\x,1+\x)--(0+\x,0+\x);
		%
		\draw[-] (0,0)--(-.2,0);
		\draw[-] (0,0)--(0,-.2);
		%
		%
		\draw[-] (1,0)--(2.2,0);
		\draw[-] (0,1)--(0,2.2);
		\draw[line width=.5pt, dotted] (-.2,-0.2)--(2.2,2.2);
		\draw (2.3,2.3)node[] {\scalebox{1}{$\displaystyle \triangle$}};
		\foreach \x in {0,1}
		\draw[fill=lightgray!80] (.5+\x,0+\x)--(1+\x,0+\x)--(1+\x,.5+\x)--(.5+\x,.5+\x)--(.5+\x,0+\x);
		\foreach \x in {0,1}
		\draw[fill=lightgray!80] (0+\x,0.5+\x)--(.5+\x,.5+\x)--(.5+\x,1+\x)--(0+\x,1+\x)--(0+\x,.5+\x);
		\foreach \x in {0,1}
		\draw[fill=black!60] (.25+\x,0+\x)--(.5+\x,0+\x)--(.5+\x,.25+\x)--(.25+\x,.25+\x)--(.25+\x,0+\x);
		\foreach \x in {0,1}
		\draw[fill=black!60] (0+\x,.25+\x)--(.25+\x,.25+\x)--(.25+\x,.5+\x)--(0+\x,.5+\x)--(0+\x,.25+\x);
		\foreach \x in {0,1}
		\draw[fill=black!60] (0.75+\x,.5+\x)--(1+\x,.5+\x)--(1+\x,.75+\x)--(.75+\x,.75+\x)--(0.75+\x,.5+\x);
		\foreach \x in {0,1}
		\draw[fill=black!60] (0.5+\x,.75+\x)--(.75+\x,.75+\x)--(.75+\x,1+\x)--(.5+\x,1+\x)--(0.5+\x,.75+\x);
		\foreach \x in {0,1/2,1,1.5}
		\draw[fill=black!90] (.25/2+\x,0+\x)--(.5/2+\x,0+\x)--(.5/2+\x,.25/2+\x)--(.25/2+\x,.25/2+\x)--(.25/2+\x,0+\x);
		\foreach \x in {0,1/2,1,1.5}
		\draw[fill=black!90] (0+\x,.25/2+\x)--(.25/2+\x,.25/2+\x)--(.25/2+\x,.5/2+\x)--(0+\x,.5/2+\x)--(0+\x,.25/2+\x);
		\foreach \x in {0,1/2,1,1.5}
		\draw[fill=black!90] (0.75/2+\x,.5/2+\x)--(1/2+\x,.5/2+\x)--(1/2+\x,.75/2+\x)--(.75/2+\x,.75/2+\x)--(0.75/2+\x,.5/2+\x);
		\foreach \x in {0,1/2,1,1.5}
		\draw[fill=black!90] (0.5/2+\x,.75/2+\x)--(.75/2+\x,.75/2+\x)--(.75/2+\x,1/2+\x)--(.5/2+\x,1/2+\x)--(0.5/2+\x,.75/2+\x);
		\end{minipage}
		\end{tikzpicture}
\hspace{5mm}
	\begin{tikzpicture}[scale=2]
	\begin{minipage}{0.5\textwidth}
	\foreach \x in {0,1}
	\draw[-] (0+\x,0+\x)--(1+\x,0+\x)--(1+\x,1+\x)--(0+\x,1+\x)--(0+\x,0+\x);
	%
	\draw[-] (0,0)--(-.2,0);
	\draw[-] (0,0)--(0,-.2);
	%
	%
	\draw[-] (1,0)--(2.2,0);
	\draw[-] (0,1)--(0,2.2);
	\draw[line width=.5pt, dotted] (-.2,-0.2)--(2.2,2.2);
	\draw (2.3,2.3)node[] {\scalebox{1}{$\displaystyle \triangle$}};
	\foreach \x in {0}
	\draw[fill=lightgray!80] (1+\x,0+\x)--(2+\x,0+\x)--(2+\x,1+\x)--(1+\x,1+\x)--(1+\x,0+\x);
	\foreach \x in {0}
	\draw[fill=lightgray!80] (0+\x,1+\x)--(1+\x,1+\x)--(1+\x,2+\x)--(0+\x,2+\x)--(0+\x,1+\x);
	\foreach \x in {0}
	\draw[fill=lightgray!80] (.5+\x,0+\x)--(1+\x,0+\x)--(1+\x,.5+\x)--(.5+\x,.5+\x)--(.5+\x,0+\x);
	\foreach \x in {0}
	\draw[fill=lightgray!80] (0+\x,0.5+\x)--(.5+\x,.5+\x)--(.5+\x,1+\x)--(0+\x,1+\x)--(0+\x,.5+\x);
	\foreach \x in {0}
	\draw[fill=lightgray!80] (.25+\x,0+\x)--(.5+\x,0+\x)--(.5+\x,.25+\x)--(.25+\x,.25+\x)--(.25+\x,0+\x);
	\foreach \x in {0}
	\draw[fill=lightgray!80] (0+\x,.25+\x)--(.25+\x,.25+\x)--(.25+\x,.5+\x)--(0+\x,.5+\x)--(0+\x,.25+\x);
	\foreach \x in {0}
	\draw[fill=lightgray!80] (.25/2+\x,0+\x)--(.5/2+\x,0+\x)--(.5/2+\x,.25/2+\x)--(.25/2+\x,.25/2+\x)--(.25/2+\x,0+\x);
	\foreach \x in {0}
	\draw[fill=lightgray!80] (0+\x,.25/2+\x)--(.25/2+\x,.25/2+\x)--(.25/2+\x,.5/2+\x)--(0+\x,.5/2+\x)--(0+\x,.25/2+\x);
	\end{minipage}
	\end{tikzpicture}

\caption{Left; the level sets $\Lambda_0$ (lightgray), $\Lambda_1$ (darkgray) and $\Lambda_2$ (black) of $\delta$. Right; $\Gamma_0$. }\label{fig:LevelSetsdelta_and_Gamma0}
\end{figure}

\section{The class $\mathscr{M}$ of dyadic homogeneous multipliers for $\mathscr{H}$. The class $\mathcal{K}$ of dyadic homogeneous kernels on $(\mathbb{R}^+)^2$} \label{sec:HomogeneousMultipliers}

In this section we explore an analogous for the Haar analysis of the well known fact that the Fourier Transform $\widehat{K}$ of a convolution kernel $K$ homogeneous of degree $m$ is homogeneous of degree $-m-n$ if $n$ denotes the dimension of the underlying space.

In the Euclidean setting $\mathbb{R}^n$, a homogeneous function is completely determined by its values on the unit sphere $S^{n-1}$ and by the degree of homogeneity. The role of $S^{n-1}$ in our setting is performed by the set $\mathcal{D}^0$ of all the dyadic intervals with length equal to one, in a sense that we shall precise.

Let us start by the definition of homogeneous multipliers for the Haar system $\mathscr{H}$.
Let $m:\mathscr{H}\to\mathbb{R}$, be bounded, $\abs{m(h)}\leq\norm{m}_{\infty}<\infty$ for every $h\in\mathscr{H}$. We say that $m\in\mathscr{M}$, or that $m$ es homogeneous of degree zero if $m(I)=m(2I)$ for every $I\in\mathcal{D}$, (here $m(I):=m(h_I)$) and, with $I=I^j_k=[k2^{-j},(k+1)2^{-j})$, $2I=[2k2^{-j},2(k+1)2^{-j})=I^{j-1}_k$. The next result shows that a function $m\in\mathscr{M}$ is characterized by its values on the Haar functions (or the dyadic intervals) of level $j=0$.

\begin{proposition}
	Let $m:\mathcal{D}\to\mathbb{R}$, bounded. If $m\in\mathscr{M}$ then for $I^j_k\in\mathcal{D}$, we have	 $m(I^j_k)=m(I^0_k)$.
\end{proposition}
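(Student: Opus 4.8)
The plan is to iterate the defining relation $m(I)=m(2I)$ enough times to move any dyadic interval $I^j_k$ to an interval of level $0$ while keeping the index $k$ fixed. The key observation is the explicit computation already recorded in the excerpt: if $I=I^j_k=[k2^{-j},(k+1)2^{-j})$, then $2I=[2k2^{-j},2(k+1)2^{-j})=[k2^{-(j-1)},(k+1)2^{-(j-1)})=I^{j-1}_k$. Thus multiplication by $2$ lowers the level index by one and leaves $k$ untouched. Applying the homogeneity hypothesis gives $m(I^j_k)=m(2I^j_k)=m(I^{j-1}_k)$.

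From here I would argue by induction on the integer $j$, treating the cases $j>0$ and $j<0$ symmetrically (the case $j=0$ being trivial). For $j>0$, repeated application of the identity $m(I^j_k)=m(I^{j-1}_k)$ yields, after $j$ steps, $m(I^j_k)=m(I^{j-1}_k)=\cdots=m(I^0_k)$. For $j<0$, one uses instead the inverse operation $\tfrac12 I$, which is also a bijection of $\mathcal{D}$ (noted in Section~\ref{sec:DyadicSetting}); since $\tfrac12 I^j_k=I^{j+1}_k$ and homogeneity can equivalently be written $m(\tfrac12 I)=m(I)$, the same telescoping runs upward in $j$ until the level reaches $0$. In both directions the number of steps is $\abs{j}$, which is finite, so the chain of equalities is legitimate and terminates at $m(I^0_k)$.

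There is essentially no serious obstacle here: the statement is a direct unwinding of the definition, and the only thing to be careful about is bookkeeping the sign of $j$ and confirming that the second index $k$ is genuinely invariant under $I\mapsto 2I$ (respectively $I\mapsto\tfrac12 I$), which is immediate from the displayed formula for $2I^j_k$. One could also phrase the whole argument without induction by simply noting that $2^j I^j_k=I^0_k$ and that $m$ is constant along the orbit $\{2^l I^j_k : l\in\mathbb{Z}\}$ under the $\mathbb{Z}$-action generated by multiplication by $2$; the homogeneity condition is precisely the statement that $m$ is invariant under this action, and $I^0_k$ is the unique representative of the orbit of $I^j_k$ lying in $\mathcal{D}^0$ (as already remarked in the construction of the classes $\widetilde{B}([k,k+1))$). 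Either presentation gives the result.
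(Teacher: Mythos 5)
Your proof is correct and follows essentially the same route as the paper: iterating the defining relation $m(I)=m(2I)$ along the orbit $\{2^l I^j_k\}$ until the level reaches $0$, using that $2I^j_k=I^{j-1}_k$ keeps the index $k$ fixed. The only difference is that you spell out both signs of $j$ (the paper writes out only the case $j<0$), which is a harmless completion of the same argument.
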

\begin{proof}
	Assume $j<0$, then $m(I^j_k)=m(2I^{j+1}_{k})=m(I^{j+1}_k)=\ldots=m(I^0_k)$.
\end{proof}

Now we introduce the notion of homogeneous kernels in the dyadic setting and we relate the two types of homogeneity via the Haar-Fourier analysis.
Let  $\mathscr{F}=\sigma(\mathscr{B})$ be $\sigma$-algebra of subsets of $(\mathbb{R}^+)^2$ generated by $\mathscr{B}$.

\begin{definition}
	An $\mathscr{F}$ measurable real function $K: (\mathbb{R}^+)^2\to \mathbb{R}$ is said to be dyadically homogeneous of degree zero if $K$ is constant on each $\Gamma_k$. We shall also write $K\in\mathcal{K}$.
\end{definition}
\begin{proposition}
	$K\in\mathcal{K}$ if and only if $K(2x,2y)=K(x,y)$ for every $x\neq y$.
\end{proposition}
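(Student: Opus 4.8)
The plan is to reduce everything to one observation: the dilation map $\phi(x,y)=(2x,2y)$ carries each butterfly $B(I)$ onto $B(2I)$, and consequently carries each $\Gamma_k$ onto itself; both implications then drop out.

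First I would recall that for $x\neq y$ one has $(x,y)\in B(I)$ if and only if $I=I(x,y)$, the smallest dyadic interval containing both points, so that the butterflies are pairwise disjoint and $\bigsqcup_{I\in\mathcal{D}}B(I)=(\mathbb{R}^+)^2\setminus\triangle$. Since multiplication by $2$ is an inclusion-preserving bijection of $\mathcal{D}$ (Section~\ref{sec:DyadicSetting}), it sends the smallest dyadic interval containing $\{x,y\}$ to the smallest one containing $\{2x,2y\}$, i.e. $I(2x,2y)=2\,I(x,y)$; equivalently $(x,y)\in B(I)\iff(2x,2y)\in B(2I)$, so $\phi\bigl(B(I)\bigr)=B(2I)$. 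Because $\sim$ was defined by $B(I)\sim B(J)$ iff $J=2^{l}I$ for some $l\in\mathbb{Z}$, the class of $B([k,k+1))=B(I^0_k)$ is exactly $\widetilde{B}([k,k+1))=\{B(I^m_k):m\in\mathbb{Z}\}$, hence $\Gamma_k=\bigcup_{m\in\mathbb{Z}}B(I^m_k)$. Applying $\phi$ and using $\phi\bigl(B(I^m_k)\bigr)=B(2I^m_k)=B(I^{m-1}_k)$ gives $\phi(\Gamma_k)=\Gamma_k$; and since $\phi$ is a bijection of $(\mathbb{R}^+)^2\setminus\triangle$ with inverse $(x,y)\mapsto(x/2,y/2)$, it restricts to a bijection of each $\Gamma_k$.

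With this in hand, for the forward direction I would argue: if $K\in\mathcal{K}$ and $x\neq y$, then $(x,y)$ lies in the unique $\Gamma_k$ with $I(x,y)=I^j_k$ for some $j$; by the above $(2x,2y)\in\Gamma_k$ as well, and $K$ being constant on $\Gamma_k$ forces $K(2x,2y)=K(x,y)$. For the converse, assume $K(2x,2y)=K(x,y)$ for all $x\neq y$. Membership in $\mathcal{K}$ also requires $\mathscr{F}$-measurability, and since $\mathscr{B}\cup\{\triangle\}$ is a countable partition of $(\mathbb{R}^+)^2$ with $\sigma(\mathscr{B})=\sigma(\mathscr{B}\cup\{\triangle\})$, this measurability amounts exactly to $K$ being constant on each $B\in\mathscr{B}$ and on $\triangle$; I take this as the standing hypothesis on $K$. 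Writing $c(B)$ for the value of $K$ on $B$, the functional equation applied to any $(x,y)\in B(I)$, for which $(2x,2y)\in B(2I)$, gives $c\bigl(B(2I)\bigr)=c\bigl(B(I)\bigr)$; iterating over $I\in\{I^m_k:m\in\mathbb{Z}\}$ shows $c$ is constant on $\widetilde{B}([k,k+1))$ for every $k$, i.e. $K$ is constant on each $\Gamma_k$, so $K\in\mathcal{K}$.

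The one point I would flag as mildly delicate is the role of $\mathscr{F}$-measurability in the converse: the functional equation relates a point of $B(I)$ only to its $\phi$-image in $B(2I)$ and imposes nothing on two distinct points inside one butterfly (nor anything on $\triangle$), so without the standing measurability assumption one could assemble non-measurable solutions of $K(2x,2y)=K(x,y)$ that are not constant on butterflies. Granting that assumption, the whole substance of the proposition is the bookkeeping identity $\phi\bigl(B(I)\bigr)=B(2I)$ and the explicit description $\widetilde{B}([k,k+1))=\{B(I^m_k):m\in\mathbb{Z}\}$ of the equivalence classes; everything else is formal.
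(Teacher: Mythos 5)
Your proof is correct and follows essentially the same route as the paper, whose entire argument is the one-line observation that $(x,y)\in\Gamma_k$ if and only if $(2x,2y)\in\Gamma_k$ --- precisely the fact you establish via $\phi\bigl(B(I)\bigr)=B(2I)$ and the description $\Gamma_k=\bigcup_{m\in\mathbb{Z}}B(I^m_k)$. Your additional remark that the converse needs the standing $\mathscr{F}$-measurability hypothesis (the functional equation alone only links a butterfly to its dilates, not points within one butterfly) is a correct refinement that the paper's terse proof leaves implicit.
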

\begin{proof}
	Follows from the fact that $(x,y)\in \Gamma_k$ if and only if $(2x,2y)\in \Gamma_k$.
\end{proof}

The next lemma provides a useful way of construction of homogeneous kernels of degree zero in terms of Haar functions and of homogeneous multipliers of degree zero.
\begin{lemma}\label{lem:OmegamDyadicallyHomogeneous}
	Let $m:\mathcal{D}\to\mathbb{R}$ be a bounded sequence indexed on the dyadic sets $\mathcal{D}$ of $\mathbb{R}^+$. Assume that $m(I)=m(2I)$ for every $I\in\mathcal{D}$. Then the function defined on $(\mathbb{R}^+)^2\setminus \triangle$ by
	\begin{equation*}
	\Omega_m(x,y)=\delta(x,y)\sum_{I\in\mathcal{D}} m(I) h_I(x) h_I(y)
	\end{equation*}
	is dyadically homogeneous of degree zero.
\end{lemma}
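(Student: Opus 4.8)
The plan is to show that $\Omega_m(2x,2y)=\Omega_m(x,y)$ for all $(x,y)\in(\mathbb{R}^+)^2\setminus\triangle$ and then invoke the previous proposition (the characterization of $\mathcal{K}$ by the scaling identity $K(2x,2y)=K(x,y)$) to conclude $\Omega_m\in\mathcal{K}$. So the whole matter reduces to two scaling computations: one for the metric factor $\delta(x,y)$ and one for the Haar sum $\sum_{I\in\mathcal{D}} m(I) h_I(x) h_I(y)$.

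First I would handle the metric factor. Since multiplication by $2$ is a bijection of $\mathcal{D}$ with $I\mapsto 2I$ sending $\mathcal{D}^j$ to $\mathcal{D}^{j-1}$, and $\delta(x,y)=\abs{I(x,y)}$ where $I(x,y)$ is the smallest dyadic interval containing both points, one checks that $I(2x,2y)=2\,I(x,y)$, hence $\delta(2x,2y)=\abs{2I(x,y)}=2\abs{I(x,y)}=2\,\delta(x,y)$. So the $\delta$ factor scales by $2$, and it remains to see that the Haar sum scales by $\tfrac12$.

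Next I would treat the Haar sum. The key identity is the effect of dilation on a single Haar function: from $h^j_k(x)=2^{j/2}h^0_0(2^jx-k)$ one gets $h^j_k(2x)=2^{j/2}h^0_0(2^{j+1}x-k)=2^{-1/2}\,h^{j+1}_k(x)$, i.e. $h_{2I}(x)=2^{-1/2}h_I(2x)$ — equivalently $h_I(2x)=2^{1/2}h_{2I}(x)$ (recalling $2I=I^{j-1}_k$ when $I=I^j_k$). Therefore
\begin{equation*}
\sum_{I\in\mathcal{D}} m(I)\, h_I(2x)\, h_I(2y) = \sum_{I\in\mathcal{D}} m(I)\, 2\, h_{2I}(x)\, h_{2I}(y).
\end{equation*}
Now reindex the sum by $J=2I$; since $I\mapsto 2I$ is a bijection of $\mathcal{D}$ this is legitimate, and using the hypothesis $m(I)=m(2I)$, i.e. $m(\tfrac12 J)=m(J)$, the right-hand side becomes $2\sum_{J\in\mathcal{D}} m(J)\, h_J(x)\, h_J(y)$. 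Combining with the metric computation, $\Omega_m(2x,2y)=\delta(2x,2y)\sum_I m(I)h_I(2x)h_I(2y)=(2\delta(x,y))\cdot(\tfrac12)\sum_J m(J)h_J(x)h_J(y)=\Omega_m(x,y)$, and the previous proposition gives $\Omega_m\in\mathcal{K}$.

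The only genuine subtlety — and the step I would be most careful about — is justifying the reindexing $J=2I$ inside the (possibly infinite) sum and the identity $I(2x,2y)=2\,I(x,y)$; both rest on the fact, recorded in Section~\ref{sec:DyadicSetting}, that multiplication by $2$ is a bijection of $\mathcal{D}$ mapping $\mathcal{D}^j$ onto $\mathcal{D}^{j-1}$. For a fixed pair $x\neq y$ at most one Haar function of each scale $j$ is nonzero at both $x$ and $y$, and in fact $h_I(x)h_I(y)\neq 0$ forces $I\supseteq I(x,y)$, so the sum is effectively over the chain of dyadic ancestors of $I(x,y)$; this makes the rearrangement unproblematic and the sum locally finite in the relevant sense. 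Modulo these bookkeeping points, the argument is the three-line computation above.
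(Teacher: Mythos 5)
Your overall strategy --- proving the scaling identity $\Omega_m(2x,2y)=\Omega_m(x,y)$ by a change of variables $J=2I$ in the Haar sum and then invoking the characterization of $\mathcal{K}$ --- is sound and genuinely different from the paper's route, which first collapses the sum to the explicit formula $\Omega_m(x,y)=-m(I(x,y))+\sum_{j\ge1}2^{-j}m(I^{(j)}(x,y))$ over the chain of ancestors of $I(x,y)$ and then reads off both the $\mathscr{F}$-measurability and the dilation invariance from that formula. However, your key dilation identity is wrong as stated, and the error propagates. From $h^j_k(2x)=2^{-1/2}h^{j+1}_k(x)$ and the fact that $I^{j+1}_k=\tfrac12 I^j_k$ (whereas $2I^j_k=I^{j-1}_k$), the correct translation is $h_I(2x)=2^{-1/2}h_{\frac12 I}(x)$, \emph{not} $h_I(2x)=2^{1/2}h_{2I}(x)$; you have confused $2I$ with $\tfrac12 I$. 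Consequently your displayed equation should read $\sum_I m(I)h_I(2x)h_I(2y)=\tfrac12\sum_I m(I)h_{\frac12 I}(x)h_{\frac12 I}(y)=\tfrac12\sum_J m(2J)h_J(x)h_J(y)=\tfrac12\sum_J m(J)h_J(x)h_J(y)$. As written, your display produces a factor $2$, which combined with $\delta(2x,2y)=2\delta(x,y)$ would yield $\Omega_m(2x,2y)=4\,\Omega_m(x,y)$; your final line then silently replaces the $2$ by $\tfrac12$, so the conclusion comes out right only through a compensating inconsistency. The slip is repairable, but the proof as written contains a false identity and an internally contradictory computation.

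A secondary point: being ``dyadically homogeneous of degree zero'' requires, besides constancy along the dilation orbits, that $\Omega_m$ be $\mathscr{F}$-measurable, i.e.\ constant on each butterfly $B(I)$; the scaling identity alone does not give this, and the paper checks it explicitly. Your closing observation that only the ancestors of $I(x,y)$ contribute is exactly what is needed to finish: for $I\supsetneq I(x,y)$ both points lie in the same half of $I$, so $h_I(x)h_I(y)=\abs{I}^{-1}$, while for $I=I(x,y)$ they lie in opposite halves, so the product is $-\abs{I(x,y)}^{-1}$; hence $\Omega_m(x,y)$ depends on $(x,y)$ only through $I(x,y)$ and is constant on each $B(I)$. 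Spelling this out closes the measurability gap (and also justifies the term-by-term rearrangement, since the sum is an absolutely convergent geometric series permuted by the bijection $I\mapsto 2I$ of $\mathcal{D}$).
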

\begin{proof}
	Given an interval $J\in\mathcal{D}$ we shall write $J^{(1)}$ to denote the first ancestor of $J$. In general, for $j\geq 1$ write $J^{(j)}$ to denote the $j$-th ancestor of $J$. A caveat: ancestry should not be confused with dyadic dilation used at defining the sets  $\Gamma_k$. Set $I(x,y)$ to denote the smallest dyadic interval containing both $x$ and $y$. Then $\abs{I(x,y)}=\delta(x,y)$. For every $I\in\mathcal{D}$ with $\abs{I}<\abs{I(x,y)}$ we have $h_I(x)=0$ or $h_I(y)=0$ hence the sum in the definition of $\Omega_m(x,y)$ is performed only on $I(x,y)$ and its ancestors. Thus
	\begin{align}
	\Omega_m(x,y) &= \delta(x,y)\left[-m(I(x,y))\frac{1}{\abs{I(x,y)}}+ \sum_{j\geq 1} m(I^{(j)}(x,y))\frac{1}{\abs{I^{(j)}(x,y)}}\right]\notag\\
	&= -m(I(x,y)) + \sum_{j\geq 1} 2^{-j} m(I^{(j)}(x,y)).\label{eq:Omegamexpression}
	\end{align}
	Notice first that if $(\xi,\eta)\in (\mathbb{R}^+)^2$ is such that $(\xi,\eta)\in B(I(x,y))$ then $I(\xi,\eta)=I(x,y)$ and also of course $I^{(j)}(x,y)=I^{(j)}(\xi,\eta)$. Hence $\Omega_m$ is $\mathscr{F}$ measurable. In order to check the dyadic homogeneity of degree zero it is enough to prove that $\Omega(2^lx,2^l y)=\Omega(x,y)$ for every $l\in\mathbb{Z}$. It suffices to prove the identity for $l=1$. Now, from the fact that $m(I)=m(2I)$,
	\begin{align*}
	\Omega_m(2x,2y) &= -m(I(2x,2y)) + \sum_{j\geq 1} 2^{-j} m(I^{(j)}(2x,2y))\\
	&= -m(2I(x,y)) + \sum_{j\geq 1} 2^{-j} m([2I(x,y)]^{(j)})\\
	&= -m(I(x,y)) + \sum_{j\geq 1} 2^{-j} m(I^{(j)}(x,y)),
	\end{align*}
	since duplication and ancestry commute and the smallest dyadic interval containing $2x$ and $2y$ is $2I(x,y)$.
\end{proof}

\section{Dyadic fractional differentiation. Fractional Laplacian and fractional gradient}\label{sec:LaplacianGradient}

The small fractional powers $\sigma$ of the standard Laplacian in $\mathbb{R}^n$ are defined on smooth functions as the absolutely convergent integrals of the form
\begin{equation*}
(-\Delta)^{\sigma} f(x) = \int_{\mathbb{R}^n} \frac{f(y)-f(x)}{\abs{x-y}^{n+2\sigma}} dy.
\end{equation*}
See for example \cite{CaSi2007}. Since the space of homogeneous type $(\mathbb{R}^+, \delta, \textrm{Lebesgue measure})$ is $1$-Ahlfors (or normal), for $f$ bounded and $\delta$-Lipschitz, we define
\begin{equation*}
D^{s} f(x) = \int_{\mathbb{R}^+} \frac{f(y)-f(x)}{\delta(x,y)^{1+s}} dy.
\end{equation*}
See \cite{AiBoGo13}. The spectral analysis of $D^s$, $0<s<1$, is provided naturally the Haar system,
\begin{equation*}
D^s f = \sum_{h\in\mathscr{H}} \abs{I(h)}^{-s}\proin{f}{h} h.
\end{equation*}
Even when we do not have a second order Laplacian in this setting, we shall keep writing $(-\Delta)^{s/2}_{dy}$ instead of $D^s$ in order to emphasize its central role as the main elliptic type operator of the dyadic setting.

For $m\in\mathscr{M}$ and $f\in \mathcal{S}(\mathscr{H})$, the linear span of $\mathscr{H}$, the operator
\begin{equation*}
D^s_m f(x)=\sum_{h\in\mathscr{H}} m(h)\abs{I(h)}^{-s}\proin{f}{h}h(x)
\end{equation*}
is called the dyadic directional derivative of $f$ of order $s$ in the direction of $m$.

A particular case of $m$ will provide the partial derivatives and the gradient. Take $i\geq 0$ and $m_i: \mathcal{D}\to\mathbb{R}$ dyadic homogeneous of degree zero, such that when $m_i$  is restricted to $\mathcal{D}^0$ coincides with the vector $\mathbf{e}_i$ of the canonic basis. More precisely, $m_i([k,k+1))=\delta_{ik}$ and $m_i$ extends to $\mathcal{D}$ by dyadic homogeneity of order zero. We write $D^s_{(i)}$ for $D^s_{m_i}$ and we call it the $i$-th partial dyadic differential operator of order $s$. The explicit spectral formula for $D^s_{(i)}$ is given by
\begin{equation*}
D^s_{(i)} f(x) = \sum_{j\in\mathbb{Z}} 2^{sj} \proin{f}{h^j_i}h^j_i(x).
\end{equation*}
The gradient of $f$ is the infinite dimensional vector
\begin{equation*}
\boldsymbol\nabla^s_{dy} f = \left(D^s_{(i)} f:\, i\geq 0\right).
\end{equation*}
A basic reason to consider this operator as a good version in the current setting of the standard gradient, is its capability to measure the energy contained in a signal $f$ defined in $\mathbb{R}^+$. The next result will clarify and precise this assertion.

The natural energy form for the dyadic setting is given by
\begin{equation*}
\mathscr{E}^\delta_s (f) = \iint_{\mathbb{R}^+\times \mathbb{R}^+}\abs{\frac{f(x)-f(y)}{\delta(x,y)^s}}^ 2 \frac{dx dy}{\delta(x,y)}.
\end{equation*}
In \cite{AiBolGo19}, Corollary~3.4 it is proved that for $f\in\mathcal{S}(\mathscr{H})$,
\begin{equation*}
\mathscr{E}^\delta_s (f) = c \sum_{h\in\mathscr{H}}\frac{\abs{\proin{f}{h}}^2}{\abs{I(h)}^{2s}},
\end{equation*}
for some positive constant $c$.

\begin{proposition}
	The $s$-dyadic energy of $f$ is finite if and only if $\boldsymbol\nabla^s_{dy} f\in L^2(\mathbb{R}^+,\ell^2(\mathbb{Z}^+))$. Moreover
	\begin{equation*}
	\mathscr{E}^\delta_s (f) = c \norm{\abs{\mathbf{\nabla}^s_{dy} f}_{\ell^2}}^2_{L^2} = c \sum_{i\geq 0}\norm{D^s_{(i)}f}^2_2
	\end{equation*}
\end{proposition}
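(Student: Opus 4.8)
The plan is to reduce everything to the already-quoted identity from \cite{AiBolGo19}, Corollary~3.4, namely $\mathscr{E}^\delta_s(f) = c\sum_{h\in\mathscr{H}}\abs{\proin{f}{h}}^2\abs{I(h)}^{-2s}$, and then to reorganize the Haar sum on the right according to the bijection between $\mathscr{H}$ and $\mathcal{D} = \bigcup_{j\in\mathbb{Z}}\mathcal{D}^j$. First I would fix $f\in\mathcal{S}(\mathscr{H})$ (so all sums are finite) and write $\sum_{h\in\mathscr{H}} = \sum_{i\geq 0}\sum_{j\in\mathbb{Z}}$, grouping the Haar functions $h^j_i$ by their ``direction'' index $i$ (the position of $I^j_i$ within $\mathcal{D}^0$ under dyadic dilation) rather than by scale. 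Since $\abs{I(h^j_i)} = 2^{-j}$, the inner sum over $j$ is exactly $\sum_{j\in\mathbb{Z}} 2^{2sj}\abs{\proin{f}{h^j_i}}^2$, which by Plancherel for the orthonormal system $\{h^j_i : j\in\mathbb{Z}\}$ (a subfamily of $\mathscr{H}$) equals $\norm{D^s_{(i)}f}_2^2$, using the explicit spectral formula $D^s_{(i)}f = \sum_{j\in\mathbb{Z}} 2^{sj}\proin{f}{h^j_i}h^j_i$ stated just above.

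Assembling these pieces gives $\mathscr{E}^\delta_s(f) = c\sum_{i\geq 0}\norm{D^s_{(i)}f}_2^2$. For the middle expression, I would invoke the definition $\boldsymbol\nabla^s_{dy}f = (D^s_{(i)}f : i\geq 0)$ together with Tonelli's theorem to exchange the integral over $\mathbb{R}^+$ with the sum over $i$:
\begin{equation*}
\norm{\abs{\boldsymbol\nabla^s_{dy}f}_{\ell^2}}_{L^2}^2 = \int_{\mathbb{R}^+}\sum_{i\geq 0}\abs{D^s_{(i)}f(x)}^2\,dx = \sum_{i\geq 0}\int_{\mathbb{R}^+}\abs{D^s_{(i)}f(x)}^2\,dx = \sum_{i\geq 0}\norm{D^s_{(i)}f}_2^2,
\end{equation*}
which identifies all three quantities in the display. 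The equivalence of ``finite energy'' with ``$\boldsymbol\nabla^s_{dy}f\in L^2(\mathbb{R}^+,\ell^2(\mathbb{Z}^+))$'' is then immediate from the chain of equalities: finiteness of any one term is finiteness of all.

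The only genuine point requiring care — and the step I expect to be the main obstacle — is the bookkeeping that makes $\{h^j_i : j\in\mathbb{Z}\}$, for fixed $i$, a well-defined orthonormal subfamily of $\mathscr{H}$ indexing exactly the terms grouped by direction $m_i$, i.e.\ verifying that the reindexing $\mathscr{H}\leftrightarrow\mathbb{Z}^+\times\mathbb{Z}$ via $h\mapsto(i,j)$ is a genuine bijection compatible with the partition of $\mathscr{B}$ into the classes $\widetilde{B}([k,k+1))$ discussed in Section~\ref{sec:DyadicSetting}. Once that is granted, orthonormality of the full Haar basis $\mathscr{H}$ restricts to orthonormality of each subfamily, Plancherel applies termwise, and the rest is routine. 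For general $f$ (not in $\mathcal{S}(\mathscr{H})$) one extends by the usual density/monotone-convergence argument, all terms being nonnegative, so that the identities persist in $[0,\infty]$.
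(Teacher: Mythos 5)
Your argument is correct and is essentially the paper's own proof: both rest on the identity $\norm{D^s_{(i)}f}_2^2=\sum_{j\in\mathbb{Z}}2^{2sj}\abs{\proin{f}{h^j_i}}^2$ (Plancherel for the orthonormal subfamily $\{h^j_i:j\in\mathbb{Z}\}$), the reindexing $\sum_{h\in\mathscr{H}}=\sum_{i\geq 0}\sum_{j\in\mathbb{Z}}$ coming from the bijection $\mathscr{H}\leftrightarrow\mathbb{Z}\times\mathbb{Z}^+$ already set up in Section~\ref{sec:DyadicSetting}, and the quoted Corollary~3.4 of \cite{AiBolGo19}. The only difference is cosmetic — you run the chain of equalities from the energy toward $\sum_{i\geq 0}\norm{D^s_{(i)}f}_2^2$ while the paper runs it the other way, and you make the Tonelli interchange for the middle term explicit where the paper leaves it implicit.
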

\begin{proof}
	Notice first $\norm{D^s_{(i)} f}^2_2 = \sum_{j\in\mathbb{Z}} 2^{2sj}\abs{\proin{f}{h^j_i}}^2$. Hence
	\begin{align*}
	\sum_{i\geq 0}\norm{D^s_{(i)} f}^2_2 &= \sum_{i\geq 0}\sum_{j\in\mathbb{Z}}2^{2sj}\abs{\proin{f}{h^j_i}}^2\\
	&= \sum_{h\in\mathscr{H}}\frac{\abs{\proin{f}{h}}^2}{\abs{I(h)}^{2s}}\\
	&=\frac{1}{c}\iint_{\mathbb{R}^+\times \mathbb{R}^+}\abs{\frac{f(x)-f(y)}{\delta(x,y)^s}}^2\frac{dx dy}{\delta(x,y)}\\
	&=\frac{1}{c}\mathscr{E}^\delta_s (f).
	\end{align*}
\end{proof}
By polarization, the above result shows that the underlying bilinear form takes a familiar aspect in terms of the gradient. Precisely,
\begin{equation*}
c\int_{\mathbb{R}^+}\proin{\boldsymbol\nabla^s_{dy} f}{\boldsymbol\nabla^s_{dy}g} dy = \iint_{\mathbb{R}^+\times\mathbb{R}^+}\frac{(f(x)-f(y))}{\delta^s(x,y)}\frac{(g(x)-g(y))}{\delta^s(x,y)}\frac{dx dy}{\delta(x,y)}.
\end{equation*}

Let us finish this section with a proposition regarding the symbolic calculus of the above differential operators in terms of our central operator $(-\Delta)^{s/2}_{dy}$.
\begin{proposition}
	\quad
	\begin{enumerate}
		\item With $m\in\mathscr{M}$ and $T_m g = \sum_{h\in\mathscr{H}} m(h) \proin{g}{h} h$,
		\begin{equation*}
		D^s_m = T_m\circ (-\Delta)^{s/2}_{dy};
		\end{equation*}
		\item with $i\geq 0$ and $T_{(i)}g = \sum_{j\in\mathbb{Z}}\proin{g}{h^j_i} h^j_i$,
		\begin{equation*}
		D^s_{(i)}  = T_{(i)}\circ (-\Delta)^{s/2}_{dy};
		\end{equation*}
		\item with $\mathbf{T} g = \left(\sum_{j\in\mathbb{Z}}\proin{g}{h^j_i} h^j_i:\, i\geq 0\right)$,
		\begin{equation*}
		\boldsymbol\nabla^s_{dy}  = \mathbf{T}\circ (-\Delta)^{s/2}_{dy}.
		\end{equation*}
		\end{enumerate}
\end{proposition}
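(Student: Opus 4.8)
The plan is to prove all three identities by checking them on the Haar basis $\mathscr{H}$ and then extending by linearity, exploiting that every operator in sight is diagonalized by $\mathscr{H}$. For (1), I would recall from the spectral description of $D^s$ that $(-\Delta)^{s/2}_{dy}h = \abs{I(h)}^{-s}h$ for each $h\in\mathscr{H}$, while $T_m h = m(h)h$. Hence $T_m\bigl((-\Delta)^{s/2}_{dy}h\bigr)=\abs{I(h)}^{-s}m(h)\,h$, which is exactly the value $D^s_m h$ read off from the defining formula of the directional derivative. Since $D^s_m$ and $T_m\circ(-\Delta)^{s/2}_{dy}$ are both linear and $\mathscr{H}$ spans $\mathcal{S}(\mathscr{H})$, the two operators coincide on $\mathcal{S}(\mathscr{H})$; on this domain every series reduces to a finite sum, so no convergence question arises.

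For (2), the first step is to identify $m_i$ as an element of $\mathscr{M}$: it is bounded (indeed $\abs{m_i}\leq 1$) and dyadically homogeneous of degree zero by construction, and by the first Proposition of Section~\ref{sec:HomogeneousMultipliers} its values on all of $\mathcal{D}$ are determined by those on $\mathcal{D}^0$, so $m_i(h^j_k)=m_i(I^j_k)=\delta_{ik}$ for every $j\in\mathbb{Z}$. Consequently $m_i$ is the indicator of the column $\set{h^j_i:\, j\in\mathbb{Z}}\subset\mathscr{H}$, whence $T_{m_i}=T_{(i)}$ and $D^s_{m_i}=D^s_{(i)}$, and (2) becomes the special case of (1) with $m=m_i$. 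Alternatively one can verify (2) directly on each $h^j_k$ using $(-\Delta)^{s/2}_{dy}h^j_k=2^{sj}h^j_k$ together with the fact that $T_{(i)}$ fixes $h^j_i$ and annihilates $h^j_k$ for $k\neq i$, so that $T_{(i)}\bigl((-\Delta)^{s/2}_{dy}h^j_k\bigr)=2^{sj}\delta_{ik}h^j_k=D^s_{(i)}h^j_k$.

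Finally, (3) follows by reading (2) coordinatewise: by definition $\boldsymbol\nabla^s_{dy}f=\bigl(D^s_{(i)}f:\, i\geq 0\bigr)$ and $\mathbf{T}g=\bigl(T_{(i)}g:\, i\geq 0\bigr)$, so applying (2) in each slot gives $\boldsymbol\nabla^s_{dy}f=\bigl(T_{(i)}\bigl((-\Delta)^{s/2}_{dy}f\bigr):\, i\geq 0\bigr)=\mathbf{T}\bigl((-\Delta)^{s/2}_{dy}f\bigr)$. There is no genuine obstacle in this argument; the only points that deserve care are the change of indexing between the $\mathscr{H}$-labelled and the $(j,k)$-labelled forms of the operators, and the verification that $m_i\in\mathscr{M}$ so that (1) legitimately applies to it. If one wished to extend these identities beyond $\mathcal{S}(\mathscr{H})$, one would additionally invoke the boundedness and closability of the operators on the ambient $L^p$ or Sobolev spaces, but on the linear span of the Haar system the computation is entirely formal.
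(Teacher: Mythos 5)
Your proposal is correct and follows exactly the route the paper intends: the paper's proof is the one-line remark that the identities are straightforward on $\mathcal{S}(\mathscr{H})$ from the multiplier definitions, and your argument simply spells out that diagonalization on the Haar basis (including the useful observation that $m_i\in\mathscr{M}$ and $T_{m_i}=T_{(i)}$). No gaps; nothing further is needed.
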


\begin{proof}
Is straightforward for functions in $\mathcal{S}(\mathscr{H})$ and the definition of the operators in terms of their multipliers.
\end{proof}

\section{On $\ell^2(\mathbb{Z}^+)$ valued Calder\'on-Zygmund singular integrals on spaces of homogeneous type}\label{sec:CalderonZygmundSingularETH}

Even when the main result of this section can be obtained in more general situations, we shall only consider the basic operators needed in Section~\ref{sec:SobolevRegularity}
in order to obtain the Sobolev regularity results for solutions of $(-\Delta)^{s/2}_{dy} u=f$.
The extension of the theory of Calder\'on-Zygmund to vector valued settings is a classical result (see \cite{BeCaPa62} and \cite{RdFRuTo86}). On the other hand, the extension of the Calder\'on-Zygmund theory (see \cite{CaZyActa52} and \cite{CaZy78}) to general domains, such as spaces of homogeneous type, is also well known (see \cite{Aimar}, \cite{Aimar85}, \cite{AiGoPetermichl}). In \cite{GraLiDa09} the two settings are considered at once with weaker assumptions regarding the regularity of the kernel.

A space $(X,d,\mu)$ with $d$ a metric on $X$ is said to be a $1$-Ahlfors or normal space of homogeneous type if $\mu$ is a Borel measure on $X$ such that $\mu(B(x,r))\simeq r$ uniformly in $x\in X$. In other words, there exist constants $0<c_1\leq c_2<\infty$ such that $c_1 r\leq \mu(B(x,r))\leq c_2 r$ for every $x\in X$ and every $r>0$.

For simplicity we denote by $\ell^2$ the space $\ell^2(\mathbb{Z}^+)$ with $\mathbb{Z}^+$ the set of nonnegative integers.
\begin{definition}\label{def:CZoperator}
	Let $(X,d,\mu)$ be a $1$-Ahlfors metric spaces. Let $\mathbf{T}: L^2(\mathbb{R}^+)\to L^2(\mathbb{R}^+,\ell^2)$ be a bounded and linear operator from the real valued $\varphi\in L^2(\mathbb{R}^+)$ functions into the $\ell^2$ valued functions with the norm $\norm{\abs{\mathbf{T}\varphi}_{\ell^2}}_{L^2(\mathbb{R}^+)}$. We say that $\mathbf{T}$ is an $\ell^2$-valued Calder\'on-Zygmund operator on $(X,d,\mu)$ if there exists a kernel $\mathbf{K}\in L^1_{loc}(X\times X\setminus \triangle, \ell^2)$ such that
	\begin{enumerate}[(i)]
		\item there exists $C_0>0$ with
		$\abs{\mathbf{K}(x,y)}_{\ell^2}\leq \frac{C_0}{d(x,y)}$, $x\neq y$;
		\item there exists $C_1>0$ and $\gamma >0$ such that\\
		ii.a) $\abs{\mathbf{K}(x',y)- \mathbf{K}(x,y)}_{\ell^2}\leq C_1 \frac{d(x',x)}{d(x,y)^{1+\gamma}}$ when $2d(x',x)\leq d(x,y)$;\\
		ii.b) $\abs{\mathbf{K}(x,y')- \mathbf{K}(x,y)}_{\ell^2}\leq C_1 \frac{d(y,y')}{d(x,y)^{1+\gamma}}$ when $2d(y,y')\leq d(x,y)$;
		\item there  exist a dense subspace $S$ of $L^2(\mathbb{R}^+)$ and a  dense subspace $\boldsymbol S$ of $L^2(\mathbb{R}^+,\ell^2)$ such that the formula
		\begin{equation*}
		\proin{\mathbf{T}\varphi}{\boldsymbol\psi} = \iint_{X\times X} \proin{\mathbf{K}(x,y) \varphi(y)}{\boldsymbol\psi(x)}_{\ell^2} dx dy
		\end{equation*}
hold for every $\varphi\in S$ and every $\boldsymbol\psi\in\boldsymbol S$ with $\delta(\supp\varphi,\supp\boldsymbol\psi)>0$.
	\end{enumerate}
\end{definition}
\begin{theorem}
	Let $\mathbf{T}$ be an $\ell^2$-valued Calder\'on-Zygmund operator defined in $(X,d,\mu)$. Then $\mathbf{T}$ is bounded as an operator from $L^p(\mathbb{R}^+)$ with values in $L^p(\mathbb{R}^+,\ell^2)$ for every $1<p<\infty$. Precisely
	\begin{equation*}
	\norm{\abs{\mathbf{T}g}_{\ell^2}}_{L^p(\mathbb{R}^+)}\leq C_p \norm{g}_{L^p(\mathbb{R}^+)}
	\end{equation*}
	for some constant $C_p$ and every $g\in L^p(\mathbb{R}^+)$.
\end{theorem}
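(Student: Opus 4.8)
The plan is to derive the theorem from the standard scalar Calder\'on--Zygmund theory on spaces of homogeneous type, using the vector-valued structure only through the Hilbert-space norm $\abs{\cdot}_{\ell^2}$. The starting point is the hypothesis that $\mathbf{T}$ is bounded from $L^2(\mathbb{R}^+)$ to $L^2(\mathbb{R}^+,\ell^2)$, which by definition means $\norm{\abs{\mathbf{T}g}_{\ell^2}}_{L^2}\leq C\norm{g}_{L^2}$. Together with the size estimate (i) and the smoothness estimates (ii.a) and (ii.b) for the $\ell^2$-valued kernel $\mathbf{K}$, this places us exactly in the setting where the Calder\'on--Zygmund machinery produces weak-type $(1,1)$ boundedness, hence by Marcinkiewicz interpolation strong-type $(p,p)$ for $1<p\leq 2$, and then strong-type $(p,p)$ for $2\leq p<\infty$ by a duality argument. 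The only point requiring care is that the target is a Banach (in fact Hilbert) space valued $L^p$, but since $\ell^2$ is a Hilbert space all the relevant tools — the Calder\'on--Zygmund decomposition of the scalar input $g$, the good-$\lambda$ / weak-$(1,1)$ estimate, and Minkowski's integral inequality — go through verbatim with absolute values replaced by $\ell^2$-norms. This is precisely the content of the references cited in the section, e.g. \cite{RdFRuTo86} for the vector-valued part and \cite{Aimar}, \cite{GraLiDa09} for the space of homogeneous type part; here I would simply assemble the argument.

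Concretely, the first step is to establish the weak-type $(1,1)$ bound: given $g\in L^1\cap L^2$ and $\lambda>0$, perform a Calder\'on--Zygmund decomposition of $g$ at height $\lambda$ relative to the normal space of homogeneous type $(X,d,\mu)$ — the $1$-Ahlfors condition guarantees a Vitali/Whitney covering of the set $\{Mg>\lambda\}$ by balls $B_i$ with bounded overlap — writing $g=h+b$ with $h$ the good part ($\norm{h}_\infty\lesssim\lambda$, $\norm{h}_1\leq\norm{g}_1$) and $b=\sum_i b_i$ the bad part with $b_i$ supported in $B_i$, mean zero, and $\norm{b_i}_1\lesssim\lambda\mu(B_i)$. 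For the good part one uses the $L^2$ hypothesis: $\mu\{\abs{\mathbf{T}h}_{\ell^2}>\lambda/2\}\leq (2/\lambda)^2\norm{\abs{\mathbf{T}h}_{\ell^2}}_2^2\lesssim\lambda^{-2}\norm{h}_2^2\lesssim\lambda^{-1}\norm{g}_1$. For the bad part, away from $\cup_i 2B_i$ one writes $\mathbf{T}b_i(x)=\int(\mathbf{K}(x,y)-\mathbf{K}(x,y_i))b_i(y)\,d\mu(y)$ using the mean-zero property (here $y_i$ is the center of $B_i$), takes $\ell^2$-norms inside the integral by Minkowski, and invokes the Hörmander-type consequence of (ii.b) — that $\int_{d(x,y_i)\geq 2r_i}\abs{\mathbf{K}(x,y)-\mathbf{K}(x,y_i)}_{\ell^2}\,d\mu(x)\lesssim 1$ uniformly — to get $\int_{X\setminus\cup 2B_i}\abs{\mathbf{T}b(x)}_{\ell^2}\,d\mu(x)\lesssim\sum_i\norm{b_i}_1\lesssim\norm{g}_1$, while $\mu(\cup_i 2B_i)\lesssim\sum_i\mu(B_i)\lesssim\lambda^{-1}\norm{g}_1$ by the doubling property. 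Combining gives $\mu\{\abs{\mathbf{T}g}_{\ell^2}>\lambda\}\lesssim\lambda^{-1}\norm{g}_1$.

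Once the weak-$(1,1)$ estimate is in hand, Marcinkiewicz interpolation between it and the $L^2\to L^2$ bound yields $\norm{\abs{\mathbf{T}g}_{\ell^2}}_{L^p}\lesssim\norm{g}_{L^p}$ for all $1<p\leq 2$. For $2<p<\infty$ I would argue by duality: the adjoint operator $\mathbf{T}^*$, acting from $L^{2}(\mathbb{R}^+,\ell^2)$ to $L^2(\mathbb{R}^+)$, has kernel $\mathbf{K}^*(x,y)=\mathbf{K}(y,x)$ viewed as a map $\ell^2\to\mathbb{R}$, which satisfies the size and smoothness bounds (i), (ii.a), (ii.b) with the roles of $x$ and $y$ swapped; the same weak-$(1,1)$-plus-interpolation argument applies to $\mathbf{T}^*$ (now with scalar-valued output and $\ell^2$-valued input, which is handled identically since $\ell^2$ is a Hilbert space), giving $L^{p'}\to L^{p'}$ bounds for $1<p'\leq 2$, and dualizing back recovers the $L^p\to L^p$ bound for $\mathbf{T}$ in the range $2\leq p<\infty$. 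Finally a density argument extends the inequality from $L^p\cap L^2$ to all of $L^p(\mathbb{R}^+)$.

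The main obstacle — and it is a mild one, since it is entirely classical — is verifying that the Calder\'on--Zygmund decomposition and the covering lemmas are available in the normal space of homogeneous type $(X,d,\mu)$ with the uniformity constants needed; this is exactly where the $1$-Ahlfors hypothesis $\mu(B(x,r))\simeq r$ is used, and it is supplied by the references \cite{Aimar}, \cite{Aimar85}, \cite{AiGoPetermichl}. A secondary point worth a sentence in the write-up is the justification of the identity $\mathbf{T}b_i(x)=\int(\mathbf{K}(x,y)-\mathbf{K}(x,y_i))b_i(y)\,d\mu(y)$ for $x\notin 2B_i$, which uses the integral representation (iii) together with the separation of supports and a limiting argument; everything else is routine bookkeeping with $\abs{\cdot}_{\ell^2}$ in place of $\abs{\cdot}$.
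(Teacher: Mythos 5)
Your argument is correct in substance, but it takes a genuinely different route from the paper: the paper does not reprove the vector-valued Calder\'on--Zygmund theorem at all. Its entire ``proof'' is the observation that $\mathcal{L}(\ell^2,\mathbb{R})$ identifies with $\ell^2$, after which the statement is quoted from \cite{GraLiDa09} (with \cite{BeCaPa62}, \cite{RdFRuTo86}, \cite{Aimar85} as background). What you do instead is reconstruct the classical machinery behind those citations: Calder\'on--Zygmund decomposition on the normal space $(X,d,\mu)$, Chebyshev plus the hypothesized $L^2$ bound for the good part, cancellation plus the H\"ormander integral condition for the bad part, Marcinkiewicz interpolation for $1<p\le 2$, and duality via $\mathbf{T}^*$ (with $\ell^2$-valued input, which the Banach-valued decomposition handles) for $2\le p<\infty$. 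This buys self-containedness at the cost of length; the paper's choice buys brevity at the cost of deferring everything to the literature. One small caution if you write your version out in full: the smoothness condition (ii) as stated in Definition~\ref{def:CZoperator} bounds the kernel difference by $C_1\,d(y,y')/d(x,y)^{1+\gamma}$ with the numerator to the \emph{first} power, and in a $1$-Ahlfors space the resulting H\"ormander integral over $\{d(x,y)\ge 2d(y,y')\}$ comes out as $C\,d(y,y')^{1-\gamma}$, which is uniform in the radius only when $\gamma=1$; you should either work with the standard normalization $d(y,y')^{\gamma}/d(x,y)^{1+\gamma}$ or note that in the application of Section~\ref{sec:SobolevRegularity} the kernel difference vanishes identically, so the point is harmless there. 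Apart from this normalization issue, your steps are the standard ones and each is available on a normal space of homogeneous type.
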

Since the space $\mathcal{L}(\ell^2,\mathbb{R})$ of all linear and bounded operators from $\ell^2$ with values in $\mathbb{R}$ can be identified with $\ell^2$ itself, the above theorem follows from the results in \cite{GraLiDa09}, for example.

\section{Sobolev regularity of solutions of $(-\Delta)^{s/2}_{dy} u = f$} \label{sec:SobolevRegularity}

As we proved in Section~\ref{sec:LaplacianGradient}, $\boldsymbol\nabla^s_{dy}=\mathbf{T}\circ (-\Delta)^{s/2}_{dy}$, where $\mathbf{T}g=\left(\sum_{j\in\mathbb{Z}}\proin{g}{h^j_i} h^j_i: i\geq 0\right)$. Hence our result will be a consequence of the result in Section~\ref{sec:CalderonZygmundSingularETH} provided we show that $\mathbf{T}$ is an $\ell^2$-valued Calder\'on-Zygmund operator in $(\mathbb{R}^+, \delta, \textrm{Lebesgue measure})$.
\begin{theorem}
	$\mathbf{T}$ is an $\ell^2$-valued Calder\'on-Zygmund operator  in $(\mathbb{R}^+, \delta, \textrm{Lebesgue measure})$ and
	\begin{equation*}
	\norm{\abs{\boldsymbol\nabla^s_{dy} u}_{\ell^2}}_{L^p(\mathbb{R}^+)}\leq C_p\norm{(-\Delta)^{s/2}_{dy}u}_{L^p(\mathbb{R}^+)},
	\end{equation*}
	for $1<p<\infty$.
\end{theorem}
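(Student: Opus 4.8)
The plan is to verify that the operator $\mathbf{T}g = \left(\sum_{j\in\mathbb{Z}}\proin{g}{h^j_i} h^j_i: i\geq 0\right)$ satisfies conditions (i), (ii) and (iii) of Definition~\ref{def:CZoperator} in the space $(\mathbb{R}^+,\delta,\textrm{Lebesgue measure})$, which is $1$-Ahlfors as noted in Section~\ref{sec:DyadicSetting}; once this is done, the $L^p$-boundedness of $\mathbf{T}$ follows from the vector-valued Calder\'on-Zygmund theorem of Section~\ref{sec:CalderonZygmundSingularETH}, and the claimed gradient estimate follows by composing with $(-\Delta)^{s/2}_{dy}$ via the identity $\boldsymbol\nabla^s_{dy}=\mathbf{T}\circ(-\Delta)^{s/2}_{dy}$ from Section~\ref{sec:LaplacianGradient}. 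First I would identify the kernel: since $\mathbf{T}g = \sum_{i\geq 0}\left(\sum_{j\in\mathbb{Z}}\proin{g}{h^j_i}h^j_i\right)\mathbf{e}_i$, the natural candidate is the $\ell^2$-valued kernel $\mathbf{K}(x,y)=\left(\sum_{j\in\mathbb{Z}} h^j_i(x)h^j_i(y): i\geq 0\right)$. The key observation is that for fixed $x\neq y$, the scalar $h^j_i(x)h^j_i(y)$ is nonzero for at most those dyadic intervals $I^j_i$ that contain both $x$ and $y$, i.e. $I(x,y)$ and its ancestors; and for a given $x\neq y$ exactly one index $i$ is active, namely the $k$ with $(x,y)\in\Gamma_k$. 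Thus $\abs{\mathbf{K}(x,y)}_{\ell^2}^2 = \abs{\sum_{j\geq 0} h^{(j)}_{I(x,y)}(x) h^{(j)}_{I(x,y)}(y)}^2$ where the sum runs over $I(x,y)$ and its ancestors; this is precisely the kind of sum that appears, up to the factor $\delta(x,y)$, in Lemma~\ref{lem:OmegamDyadicallyHomogeneous} with $m\equiv 1$. Evaluating as in \eqref{eq:Omegamexpression}, $\sum h_I(x)h_I(y)$ over $I\supseteq I(x,y)$ equals $-\frac{1}{\abs{I(x,y)}}+\sum_{j\geq1} 2^{-j}\frac{1}{\abs{I(x,y)}} = -\frac{1}{\abs{I(x,y)}}+\frac{1}{\abs{I(x,y)}} = 0$? — no: one must be careful, since the Haar functions $h^{(j)}_{I(x,y)}$ take value $\pm 2^{j/2}\cdot(\text{sign on the relevant half})$, so the signs need to be tracked. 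The correct computation gives $\abs{\mathbf{K}(x,y)}_{\ell^2}\leq C/\delta(x,y)$, verifying (i).

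For the smoothness conditions (ii.a) and (ii.b): here the ultrametric structure of $\delta$ does the heavy lifting. When $2\delta(x',x)\leq\delta(x,y)$, the ball $I(x,y)$ strictly contains $x'$ as well, and moreover $x$ and $x'$ lie in the same dyadic child of $I(x,y)$; consequently $I(x',y)=I(x,y)$ and, more importantly, for every ancestor $I$ of $I(x,y)$ one has $h_I(x')=h_I(x)$ (both points sit in the same dyadic half of $I$). Therefore $\mathbf{K}(x',y)-\mathbf{K}(x,y)$ receives contributions only from dyadic intervals $I$ with $I(x',x)\subseteq I\subsetneq I(x,y)$ — but such intervals satisfy $h_I(y)=0$ since $y\notin I$. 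Hence $\mathbf{K}(x',y)=\mathbf{K}(x,y)$ exactly, so (ii.a) holds trivially with any $\gamma>0$, and symmetrically (ii.b). This is the payoff of working in the dyadic/ultrametric setting: the kernel is locally constant, so the Hölder condition is automatic. Condition (iii) is essentially a matter of bookkeeping: take $S$ to be $\mathcal{S}(\mathscr{H})$ (the linear span of the Haar system) and $\boldsymbol S$ the $\ell^2$-valued analogue; for $\varphi,\boldsymbol\psi$ with $\delta(\supp\varphi,\supp\boldsymbol\psi)>0$ one expands $\proin{\mathbf{T}\varphi}{\boldsymbol\psi}=\sum_{i\geq0}\sum_{j\in\mathbb{Z}}\proin{\varphi}{h^j_i}\proin{h^j_i}{\psi_i}$ and recognizes this as $\iint\proin{\mathbf{K}(x,y)\varphi(y)}{\boldsymbol\psi(x)}_{\ell^2}\,dx\,dy$ by Fubini, the interchange being justified by the support separation which forces only finitely many (large-scale) Haar intervals to contribute.

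With (i)–(iii) established, the theorem of Section~\ref{sec:CalderonZygmundSingularETH} gives $\norm{\abs{\mathbf{T}g}_{\ell^2}}_{L^p}\leq C_p\norm{g}_{L^p}$ for $1<p<\infty$. Applying this with $g=(-\Delta)^{s/2}_{dy}u$ and using $\boldsymbol\nabla^s_{dy}u = \mathbf{T}\big((-\Delta)^{s/2}_{dy}u\big)$ yields $\norm{\abs{\boldsymbol\nabla^s_{dy}u}_{\ell^2}}_{L^p(\mathbb{R}^+)}\leq C_p\norm{(-\Delta)^{s/2}_{dy}u}_{L^p(\mathbb{R}^+)}$, first for $u\in\mathcal{S}(\mathscr{H})$ and then for general $u$ by density. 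The main obstacle I anticipate is not any single step but rather the careful sign-tracking in the size estimate (i): one must confirm that the telescoping sum of signed Haar values over the tower of ancestors of $I(x,y)$ is bounded by a constant multiple of $\delta(x,y)^{-1}$ rather than merely $O(\log)$ or worse — this is where the precise normalization $h^j_k = 2^{j/2}h^0_0(2^j\cdot-k)$ and the geometric decay $2^{-j}$ of ancestor lengths must be combined correctly, exactly as in the computation of $\Omega_m$ in Lemma~\ref{lem:OmegamDyadicallyHomogeneous}.
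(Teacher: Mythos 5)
Your overall route coincides with the paper's: identify the kernel $\mathbf{K}(x,y)=\bigl(\sum_{j\in\mathbb{Z}}h^j_i(x)h^j_i(y):i\geq 0\bigr)$, reduce the size estimate to the formula \eqref{eq:Omegamexpression} for $\Omega_{m_i}$ from Lemma~\ref{lem:OmegamDyadicallyHomogeneous}, observe that in the ultrametric $\delta$ the kernel is locally constant so that (ii.a) and (ii.b) hold with the difference identically zero, verify (iii) on the Haar span, and then conclude via the vector-valued theorem and the factorization $\boldsymbol\nabla^s_{dy}=\mathbf{T}\circ(-\Delta)^{s/2}_{dy}$. Parts (ii), (iii) and the final deduction are essentially as in the paper (for (iii) the paper additionally invokes the pointwise bound \eqref{eq:estimateabovefordeltaquadratic} and the integrability of $\delta(x,y)^{-2}$ off the diagonal to justify Fubini, since the sum over $j$ defining each $K_{(i)}$ is infinite even when $\varphi$ is a finite Haar combination).

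However, your sketch of the size estimate (i) contains a step that fails as written. It is not true that for a fixed pair $x\neq y$ only one coordinate $i$ is active: the nonzero terms come from $I(x,y)$ and its ancestors, and these generally carry \emph{different} position indices. For example, $x=1.1$, $y=1.2$ give $I(x,y)=I^2_4$ with ancestors $I^1_2$, $I^0_1$, $I^{-1}_0,\dots$, so the coordinates $i=4,2,1,0$ of $\mathbf{K}(x,y)$ are all nonzero. Consequently the identity $\abs{\mathbf{K}(x,y)}^2_{\ell^2}=\abs{\sum_{j\geq 0}h_{I^{(j)}(x,y)}(x)\,h_{I^{(j)}(x,y)}(y)}^2$ is false; worse, the scalar sum on the right is \emph{always} zero for $x\neq y$, since it equals $\delta(x,y)^{-1}(-1+\sum_{j\geq 1}2^{-j})=0$ (the base term is $-1/\abs{I(x,y)}$ because $x$ and $y$ lie in different halves of $I(x,y)$, while each ancestor contributes $+2^{-j}/\abs{I(x,y)}$ because they lie in the same half). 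So the difficulty you flag is not ``sign-tracking'' but the fact that these terms sit in distinct $\ell^2$ coordinates and hence cannot cancel. The correct argument, which is the paper's, is coordinatewise: each level of the tower contributes to exactly one coordinate, and the triangle inequality in $\ell^2$ gives $\abs{\mathbf{K}(x,y)}_{\ell^2}\leq\delta(x,y)^{-1}\bigl(1+\sum_{j\geq 1}2^{-j}\bigr)=2/\delta(x,y)$, using that $\sum_{i\geq 0}m_i^2(J)=1$ for every fixed $J\in\mathcal{D}$. With this repair your proposal is complete and matches the paper's proof.
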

\begin{proof}
All we need is to check that $\mathbf{T}$ satisfies the Definition~\ref{def:CZoperator} of the previous section with $X=\mathbb{R}^+$, $d=\delta$ and $d\mu=dx$. The $L^2(\mathbb{R}^+,\ell^2)$ boundedness of $\mathbf{T}$ is a straightforward consequence of the $L^2(\mathbb{R}^+)$ summability of the Haar series of an $L^2(\mathbb{R}^+)$ function. Let us start by proving the estimate
\begin{equation}\label{eq:estimateabovefordeltaquadratic}
\sum_{i\geq 0}\left(\sum_{j\in\mathbb{Z}} h^j_i(x) h^j_i(y)\right)^2 \leq\frac{4}{\delta^2(x,y)}
\end{equation}
 for every $x\neq y$ in $\mathbb{R}^+$. This estimate will be useful at proving $(i)$ in Definition~\ref{def:CZoperator} and helpful in order to show the absolute convergence of some of the multiple integrals needed to show $(iii)$. Set $K_{(i)}(x,y)=\sum_{j\in\mathbb{Z}}h^j_i(x) h^j_i(y)$, $x\neq y$, $\boldsymbol K = (K_{(i)}: i\geq 0)$, then we have to estimate the $\ell^2$ norm of $\mathbf{K}(x,y)$ for $x\neq y$, $\abs{\mathbf{K}(x,y)}^2_{\ell^2}=\sum_{i\geq 0}\abs{K_{(i)}(x,y)}^2$. Now for $i\geq 0$,
\begin{equation*}
K_{(i)} (x,y) = \sum_{j\in\mathbb{Z}} h^j_i(x) h^j_i(y) = \frac{\Omega_{(i)}(x,y)}{\delta(x,y)}
\end{equation*}
and, from the formula for $\Omega_{(i)}=\Omega_{m_i}$ given by \eqref{eq:Omegamexpression}, we have
\begin{equation*}
K_{(i)}(x,y) = \frac{1}{\delta(x,y)}\left[-m_i(I(x,y)) + \sum_{j\geq 1} 2^{-j} m_i(I^{(j)}(x,y))\right].
\end{equation*}
Hence, for $x\neq y$, we have
\begin{equation*}
\abs{\mathbf{K}(x,y)}_{\ell^2} \leq \frac{1}{\delta(x,y)}\left[\sqrt{\sum_{i\geq 0} m_i^2(I(x,y))} + \sum_{j\geq 1} 2^{-j}\sqrt{\sum_{i\geq 0}m_i^2(I^{(j)}(x,y))}\right]
\leq \frac{2}{\delta(x,y)},
\end{equation*}
because of the definition of $m_i$.

Let us formally check that properties $(ii.a)$ and $(ii.b)$ hold with $d=\delta$. Take three points, $x$, $x'$ and $y\in\mathbb{R}^+$ with $2\delta(x,x')\leq\delta(x,y)$. This means, with the above notation, that $2\abs{I(x,x')}\leq\abs{I(x,y)}$ and, since $x\in I(x',x)\cap I(x,y)$ and both are dyadic intervals, we have $I(x,y)\supsetneq I(x,x')$. Notice also that $y\notin I(x,x')$, hence $I(x,y)=I(x',y)$, $\delta(x,y)=\delta(x',y)$ and $I^{(j)}(x,y)= I^{(j)}(x',y)$ for every $j\geq 1$.
So that
\begin{align*}
K_{(i)}(x',y) &= \frac{\Omega_{(i)}(x',y)}{\delta(x',y)}\\
&=\frac{1}{\delta(x',y)}\left(-m(I(x',y)) + \sum_{j\geq 1} 2^{-j} m_i(I^{(j)}(x',y))\right)\\
&= \frac{\Omega_{(i)}(x,y)}{\delta(x,y)}\\
&= K_{(i)}(x,y),
\end{align*}
and $\mathbf{K}(x',y)=\mathbf{K}(x,y)$ for $2\delta(x,x')\leq \delta(x,y)$ and condition $(ii.a)$ holds trivially.

Let us finally prove $(iii)$ in Definition~\ref{def:CZoperator}. It suffices to show $(iii)$ for $\varphi\in S(\mathscr{H})$, the scalar linear span of $\mathscr{H}$, and $\boldsymbol\psi\in \boldsymbol S(\mathscr{H})$, the vector linear span of $\mathscr{H}$. Since we are only dealing with finite sums we see that
	\begin{align*}
	\proin{\mathbf{T}\varphi}{\boldsymbol\psi} &= \sum_{i\geq 0} \proin{T_i\varphi}{\psi_i}
	\\
	&= \sum_{i\geq 0} \int_{x\in\mathbb{R}^+} T_i\varphi(x) \psi_i(x) dx\\
	&= \sum_{i\geq 0} \int_{x\in\mathbb{R}^+}\sum_{j\in\mathbb{Z}}\proin{\varphi}{h^j_i} h^j_i(x)\psi_i(x) dx\\
    &= \sum_{i\geq 0} \int_{x\in\mathbb{R}^+}\left[\sum_{j\in\mathbb{Z}}\left(\int_{\mathbb{R}^+}\varphi(y) h^j_i(y) dy\right)h^j_i(x)\right]\psi_i(x) dx.
	\end{align*}
Now set $A=\supp\varphi$ and $B=\supp\boldsymbol\psi$. We are assuming $A\cap B=\emptyset$, hence $\delta(A,B)>0$. Since, from \eqref{eq:estimateabovefordeltaquadratic} we have
\begin{align*}
\int_{x\in B}\int_{y\in A}\sum_{i\geq 0} \abs{\sum_{j\in\mathbb{Z}} h^j_i(y) h^j_i(x)} & \abs{\varphi(y)}\abs{\psi_i(x)} dy dx \\
&\leq 2 \int_{y\in A}\abs{\varphi(y)}\left(\int_{x\in B}\frac{dx}{\delta^2(x,y)}\right)^{1/2} dy \norm{\abs{\boldsymbol\psi}_{\ell^2}}_{L^2}
\end{align*}
which is finite, then
	\begin{equation*}
	\proin{\mathbf{T}\varphi}{\boldsymbol\psi} = \int_{x\in\mathbb{R}^+}\int_{y\in\mathbb{R}^+} \proin{\mathbf{K}(x,y)\varphi(y)}{\boldsymbol\psi(x)}_{\ell^2} dy dx,
	\end{equation*}
with $K_{(i)}(x,y) = \sum_{j\in\mathbb{Z}} h^j_i(x) h^j_i(y)$ and $\boldsymbol K = (K_{(i)}: i\geq 0)$, as desired.
\end{proof}



\providecommand{\bysame}{\leavevmode\hbox to3em{\hrulefill}\thinspace}
\providecommand{\MR}{\relax\ifhmode\unskip\space\fi MR }
\providecommand{\MRhref}[2]{%
  \href{http://www.ams.org/mathscinet-getitem?mr=#1}{#2}
}
\providecommand{\href}[2]{#2}



\bigskip
\noindent{\footnotesize Hugo Aimar, Juan Comesatti, and Ivana G\'omez.
	\textsc{Instituto de Matem\'{a}tica Aplicada del Litoral, UNL, CONICET.}
	
	%
%
\medskip

\noindent{\footnotesize Luis Nowak.
	\textsc{Instituto de Investigaci\'on en Tecnolog\'ias y Ciencias de la Ingenier\'ia, Universidad Nacional del Comahue, CONICET; Departamento de Matem\'atica-FaEA-UNComa.}
	
	\noindent\textmd{Neuqu\'en, Argentina}
}

\medskip
\noindent \textit{Corresponding author. E-mail:} \verb|ivanagomez@santafe-conicet.gov.ar|

\noindent \textit{Address.} \textmd{IMAL, CCT CONICET Santa Fe, Predio ``Alberto Cassano'', Colectora Ruta Nac.~168 km 0, Paraje El Pozo, S3007ABA Santa Fe, Argentina.}
}

\end{document}